\documentclass[11pt]{amsart}





\usepackage{epsfig}
\usepackage{amsfonts}
\usepackage{amssymb}
\usepackage{amsmath}
\usepackage{amsthm}
\usepackage{amscd}
\usepackage{amstext}
\usepackage{latexsym}

\usepackage{setspace}

\textwidth 5.4in

\evensidemargin 0.4in
\oddsidemargin 0.4in

\usepackage{enumerate}

\usepackage{xy}
\usepackage{footmisc}
\input xy
\xyoption{all}

\theoremstyle{plain}

\newtheorem{theorem}{Theorem}[section]

\newtheorem{lemma}[theorem]{Lemma}
\newtheorem{corollary}[theorem]{Corollary}
\newtheorem{proposition}[theorem]{Proposition}
\newtheorem{definition}[theorem]{Definition}

\newtheorem*{division problem}{Division Problem}

\theoremstyle{remark}
\newtheorem{remark}[theorem]{Remark}
\newtheorem{example}[theorem]{\textnormal{\textbf{Example}}}

\newcommand{\CC}{\mathbf{C}}
\newcommand{\QQ}{\mathbf{Q}}

\newcommand{\JJ}{\mathcal{J}}
\newcommand{\OO}{\mathcal{O}}

\newcommand{\al}{\alpha}

\newcommand{\ep}{\epsilon}

\newcommand{\qa}{\quad}
\newcommand{\vp}{\varphi}

\newcommand{\lsm}{\preceq}
\newcommand{\gsm}{\succeq}

\newcommand{\noi}{\noindent}

\providecommand{\abs}[1]{\left|#1\right|}

\providecommand{\norm}[1]{\lVert#1\rVert}

\DeclareMathOperator{\spann}{span}

\DeclareMathOperator{\Siu}{Siu}


		

\begin{document}



\title{Equivalence of plurisubharmonic singularities and Siu-type metrics}


\author{Dano Kim}


\normalsize

\maketitle

\begin{abstract}

\noindent  We show by an example that the (equivalence class of) singularity of a plurisubharmonic function cannot be determined by the data of its Lelong numbers, in a nontrivial sense. Such an example is provided by Siu-type singular hermitian metrics associated to a line bundle. We also show that a Siu-type metric has analytic singularities if and only if the section ring of the line bundle is  finitely generated. 
\end{abstract}

\let\thefootnote\relax\footnotetext{\noindent This work was supported by the  National Research Foundation of Korea grants NRF-2012R1A1A1042764 and No.2011-0030795, funded by the Republic of Korea government.}

\section{Introduction}

  Plurisubharmonic (psh for short) functions appear naturally and have numerous applications in complex analysis and complex algebraic geometry (see \cite{D} for an excellent reference). A psh function often appears as a local weight function of a singular hermitian metric of a line bundle on a complex manifold.  
  
  We say that a psh function $\vp$ has a \textit{singularity} at $p$ if $\vp (p) = -\infty$.   Two of the most important ways to measure the singularity are in terms of Lelong numbers and in terms of multiplier ideal sheaves~\cite{D},\cite{N}. It is well known that general plurisubharmonic functions with non-analytic singularities (see Definition~\ref{analy}) may have subtle and complicated behaviour. 
 
 There are the following conditions (A), (B) and (C) which compare singularity of two psh functions $u$ and $v$ defined on a complex manifold.  The  fundamental result of Boucksom, Favre and Jonsson \cite[Theorem A]{BFJ}, combined with the recent `strong openness theorem' of Guan and Zhou \cite{GZ} (see also \cite{H} and Remark~\ref{openn}), shows the implications (A) $\to$ (B) $\to$ (C) : 
 
\begin{enumerate}[(A)]

\item

  $u$ and $v$ have equivalent singularities. 
\\
 
\item  (B-1)  For every real number $m > 0$, the equality of multiplier ideal sheaves $\JJ(m u) = \JJ(m v)$ holds. 

 \noindent   (B-2) $u$ and $v$ have the same Lelong number at every point of all proper modifications over $X$. 

\item   $u$ and $v$ have the same Lelong numbers at every point of $X$. 

\end{enumerate}

It is the highly nontrivial equivalence of (B-1) and (B-2) that the above theorems imply.    Note that (B-1) implies (C) by Demailly approximation~\cite{D}. All three conditions are inequivalent: first of all, there is a local trivial example showing that (A) and (B) are inequivalent : 
when $u-v$ is a psh function $\vp$ with zero Lelong number at every point but not locally bounded (e.g. $u-v = -\sqrt{-\log \abs{z}}$).  (For (B) and (C), see Example~\ref{jonsson}.) Note that this trivial example cannot be globalized in that it cannot be used to give two singular hermitian metrics $e^{-u}$ and $e^{-v}$ of one line bundle for which (B) holds but (A) does not. 


  In this paper, we give a nontrivial global  example showing the inequivalence of (A) and (B).
  Such an example should involve a psh function which does not have analytic singularities: if we assume that both $u$ and $v$ have analytic singularities, then the conditions (A) and (B) are equivalent for them as it is easy to see from their common log-resolution. 
  

 Such a psh function with non-analytic singularities is provided by  Siu-type singular hermitian metric $h_{\Siu}$ of a $\QQ$-effective line bundle $L$ which is analogous to the stable base locus of $L$ in algebraic geometry. It was defined and used successfully in Siu's proof of invariance of plurigenera~\cite{S98} in the case of general type. 
 The analytic methods of \cite{S98} were adapted to one in algebraic language (\cite{Ka}, see also \cite[Section 11.5]{L}), replacing the use of $h_{\Siu}$ by the asymptotic multiplier ideal sheaf $\JJ (  \norm{L} )$.  
  
  The definition of $h_{\Siu}$ actually involves a choice of coefficients and it is natural to ask whether the metric is nevertheless uniquely determined  up to equivalence of singularities. We show that the uniqueness fails whenever the section ring of the line bundle $L$ is not finitely generated (\textbf{Theorem~\ref{siusiu}}). In fact, in such a case, there exists an infinite family of Siu-type metrics $h_{\Siu,i} (i \in I)$ such that none of them have equivalent singularties (\textbf{Corollary~\ref{siuco}}).

 As for their multiplier ideal sheaves, \cite[Theorem 1.11]{DEL} implies that for a big line bundle $L$, we have the inclusions (for every real $m > 0$) 
 
\begin{equation}\label{deldel}
 \JJ_{+} (h_{\min}^m) \subset  \JJ ( m \norm{L} ) \subset \JJ(h^m_{\Siu, i}) \subset \JJ (h^m_{\min}) \quad \quad \forall i \in I
 \end{equation}
 
\noi where $h_{\min}$ is the singular hermitian metric of minimal singularities associated to a pseudo-effective line bundle $L$, due to Demailly. Note that \cite[Theorem 1.11]{DEL} assumes projectivity since Ohsawa-Takegoshi theorem is used. Also the proof of \cite[Theorem 1.11]{DEL} therein for $m=1$ works for every real $m > 0$ to yield \eqref{deldel} as we see in the last lines of the proof.

  Now the inclusions in \eqref{deldel} are all equalities due to the openness theorem in \cite{GZ}.  In particular, it follows that $\JJ(h^m_{\Siu, i}) = \JJ(h^m_{\Siu, j})$ for all $i,j \in I$ and all real $m > 0$. Thus we obtain the following theorem as a consequence of Theorem~\ref{siusiu} (denoting $e^{-\vp_i} := h_{\Siu, i}$). 

\begin{theorem}\label{maint}
 
 Let $L$ be a big line bundle on a  smooth projective variety $X$ such that the section ring of $L$ is not finitely generated. Then there exists an infinite family $\{ e^{-\vp_i} \}_{i \in I}$ of singular hermitian metrics of $L$ such that for all $j \neq k$ in $I$, the equality of multiplier ideal sheaves $\JJ(m \vp_j) = \JJ(m \vp_k)$ holds for every real $m > 0$ and yet $\vp_j$ and $\vp_k$ do not have equivalent singularities. 

\end{theorem}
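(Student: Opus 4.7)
The plan is to package together three ingredients that have already been set up in the introduction: the infinite family of Siu-type metrics furnished by Theorem~\ref{siusiu} and Corollary~\ref{siuco}; the chain of inclusions \eqref{deldel} coming from \cite[Theorem~1.11]{DEL}; and the strong openness theorem of Guan--Zhou \cite{GZ}. The non-equivalence-of-singularities part of the conclusion will be taken directly from Corollary~\ref{siuco}, so the actual task is only to verify that the multiplier ideal sheaves $\JJ(m\vp_i)$ are independent of $i \in I$.

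First I would invoke Corollary~\ref{siuco} (which is where the hypothesis that the section ring of $L$ is not finitely generated enters) to obtain the infinite family $\{h_{\Siu,i}\}_{i \in I}$ of Siu-type singular hermitian metrics on $L$, no two of which have equivalent singularities, and set $e^{-\vp_i} := h_{\Siu,i}$. Then, for every real $m > 0$ and every $i \in I$, I would write down the four-term chain
\begin{equation*}
\JJ_{+}(h_{\min}^m) \ \subset\ \JJ(m\norm{L}) \ \subset\ \JJ(h_{\Siu,i}^m) \ \subset\ \JJ(h_{\min}^m),
\end{equation*}
which is \eqref{deldel} applied to our big line bundle $L$ on the smooth projective variety $X$. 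Next I would apply the strong openness theorem of Guan--Zhou to collapse the outer terms, $\JJ_{+}(h_{\min}^m) = \JJ(h_{\min}^m)$; this forces all the inclusions in the chain to be equalities. In particular $\JJ(h_{\Siu,i}^m) = \JJ(m\norm{L})$ is independent of $i$, so $\JJ(m\vp_j) = \JJ(m\vp_k)$ for every $j,k \in I$ and every real $m > 0$. Combining with the non-equivalence statement from Corollary~\ref{siuco} yields Theorem~\ref{maint}.

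I do not expect any real obstacle inside this deduction: once Theorem~\ref{siusiu} and \eqref{deldel} are available, Theorem~\ref{maint} is a formal consequence. The substantive difficulty of the paper lies upstream, in the proof of Theorem~\ref{siusiu}--Corollary~\ref{siuco}: namely, extracting from the failure of finite generation of the section ring a genuinely infinite set of coefficient choices in the Siu-type construction that produce pairwise inequivalent weights $\vp_i$. All the delicate work sits there; the deduction above is essentially bookkeeping built around the identity $\JJ_{+}(h_{\min}^m) = \JJ(h_{\min}^m)$ supplied by strong openness.
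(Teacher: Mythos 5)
Your proposal is correct and follows exactly the route the paper takes: invoke Corollary~\ref{siuco} for the pairwise inequivalent family of Siu-type metrics, write down the chain \eqref{deldel} from \cite[Theorem~1.11]{DEL}, and collapse it to equalities via the strong openness theorem $\JJ_{+}(h_{\min}^m) = \JJ(h_{\min}^m)$ of \cite{GZ}. There is no substantive difference from the paper's own derivation.
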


 In the last section of this paper, we also show that a Siu-type metric has analytic singularities if and only if the section ring of the line bundle is finitely generated (\textbf{Theorem~\ref{thm2}, Corollary~\ref{fgs}}). This strengthens the previously known result that the section ring of the line bundle is finitely generated if and only if a Siu-type metric is equivalent to a finite partial sum (see Proposition~\ref{partial}). Corollary~\ref{fgs} is proved again following the main theme of this paper: comparison of the relations (A) and (B) for psh functions. In particular, Corollary~\ref{fgs} gives us many new examples of psh functions with non-analytic singularities, resulting from each instance of a  line bundle with non-finitely generated section ring on a compact complex manifold. 
 
 We remark that Section 3 of this paper had existed as a note `A remark on Siu-type metric' (e.g. in References of~\cite{M1}). 
 
\begin{remark}\label{openn}

  In this paper, the strong openness theorem $\JJ_+ (\vp) = \JJ (\vp)$ for a psh function $\vp$ of \cite{GZ} is used in three different ways: the equivalence of (B-1) and (B-2), Theorem~\ref{maint} (as above) and Proposition~\ref{zero}. Note that Theorem~\ref{thm2} also uses the equivalence of (B-1) and (B-2).   As we mentioned, the strong openness conjecture $\JJ_+ (\vp) = \JJ (\vp)$ was recently proved by \cite{GZ} in all dimensions, after which \cite{H} presented a simplified proof. Before \cite{GZ}, it was proved in dimension $2$ case by \cite{FJ1}, \cite{FJ2}, in the case of toric psh functions by \cite{G} and in the case when $\JJ(\vp) = \OO_X$ by \cite{B}. Also \cite{JM},  \cite{GZ1}, \cite{Le} are related to the recent activity on this topic.

\end{remark}

 \qa

\noi \textit{Acknowledgement.}  The author wishes to thank S{\'e}bastien Boucksom for his crucial suggestion of using an argument of \cite{BEGZ} toward Theorem~\ref{siusiu} and Mihai P\u aun for showing him the proof of Proposition~\ref{zero} in 2009.  He also thanks Mattias Jonsson for telling him Example~\ref{jonsson}, Nikolay Shcherbina for raising a question on using multiplier ideal sheaves to distinguish psh functions and the anonymous referee for helpful comments.

\section{  PSH with zero Lelong numbers  }

 We begin with the following slightly generalized definition of a psh function with analytic singularities (cf. \cite[Definition 1.10]{D}). 

\begin{definition}\label{analy}

 We say that a psh function $\vp$ on a complex manifold \textbf{has analytic singularities} if it can be locally written as $\vp = c \log (\sum_{i=1}^N \abs{f_i}^{2} ) + v $ where $f_i$ are local holomorphic functions, $c > 0$ a real number and $v$ a locally bounded function. 

\end{definition}

 If $\vp$ does not satisfy this definition, we will say that $\vp$ has non-analytic singularities.    Perhaps the simplest examples of a nontrivial (i.e. with non-analytic singularities) psh function with zero Lelong number everywhere are given as follows. (For another example, see Example~\ref{siuzero}.)
 
\begin{example}

 Let $\vp_\alpha (z) = - (- \log \abs{z})^{\alpha} $ for $0 < \alpha < 1$ on $B := \{ \abs{z} < 1 \} \subset \CC^n$. Let $\vp_0 (z) = - \log (- \log \abs{z}) $ on $B$. They are psh and the Lelong number $\nu (\vp_\alpha, p) = 0$ for every $p \in B$ ($0 \le \alpha < 1$). 

\end{example}

 The following proposition generalizes \cite[Proposition 1.1]{N} which was the trivial case of $\psi$ smooth. 

\begin{proposition}[M. P\u aun]\label{zero}

 Let $\vp$ and $\psi$ be two plurisubharmonic functions on a complex manifold $X$. Suppose that $\psi$ has zero
 Lelong number at every point of $X$. Then we have  $ \JJ( m(\vp + \psi) ) = \JJ (m \vp ) $ for every real $m > 0$. 

\end{proposition}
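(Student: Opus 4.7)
The plan is to prove the two inclusions $\JJ(m(\vp+\psi)) \subseteq \JJ(m\vp)$ and $\JJ(m\vp) \subseteq \JJ(m(\vp+\psi))$ separately. The first is elementary, while the second is the heart of the matter and requires both the strong openness theorem and Skoda's integrability theorem for psh functions with small Lelong numbers.

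For the easy direction, since multiplier ideals are defined locally, near any point $x \in X$ one can use that $\psi$ is upper semi-continuous and hence bounded above by some constant $C$, giving $e^{-m(\vp+\psi)} \ge e^{-mC} e^{-m\vp}$. Local integrability of $\abs{h}^2 e^{-m(\vp+\psi)}$ therefore forces that of $\abs{h}^2 e^{-m\vp}$ for any holomorphic germ $h$ at $x$.

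For the harder direction, let $h$ be a holomorphic germ in $\JJ(m\vp)_x$. By the strong openness theorem of Guan--Zhou, there exists $\ep > 0$ with $h \in \JJ((m+\ep)\vp)_x$, i.e.\ $\int \abs{h}^2 e^{-(m+\ep)\vp} \vol < \infty$ on a small neighborhood of $x$. I would then apply H\"older's inequality with conjugate exponents $p = (m+\ep)/m$ and $q = (m+\ep)/\ep$ to the pointwise factorization $\abs{h}^2 e^{-m\vp} e^{-m\psi} = \bigl(\abs{h}^{2/p} e^{-m\vp}\bigr)\cdot\bigl(\abs{h}^{2/q} e^{-m\psi}\bigr)$, so as to obtain
\[
\int \abs{h}^2 e^{-m(\vp+\psi)} \vol \;\le\; \left(\int \abs{h}^2 e^{-(m+\ep)\vp} \vol\right)^{1/p} \left(\int \abs{h}^2 e^{-mq\psi} \vol\right)^{1/q}.
\]
The first factor is finite by the choice of $\ep$. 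For the second factor, $\abs{h}^2$ is locally bounded since $h$ is holomorphic, and Skoda's integrability theorem, applied to $\psi$ whose Lelong number vanishes at every point, ensures that $e^{-T\psi}$ is locally integrable for every real $T > 0$; in particular the second factor is finite.

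The main point is that strong openness creates the slack $\ep$ in the exponent of $\vp$ that is needed to absorb the factor $e^{-m\psi}$ through H\"older, decoupling the contributions of $\vp$ and $\psi$. Skoda's theorem is what converts the qualitative hypothesis of vanishing Lelong numbers of $\psi$ into the concrete integrability of $e^{-mq\psi}$; without both ingredients the elementary bound used in the easy inclusion cannot be reversed, since $\psi$ itself is allowed to be unbounded below.
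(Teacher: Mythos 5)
Your proof is correct and follows essentially the same route as the paper: reduce to the nontrivial inclusion $\JJ(m\vp)\subseteq\JJ(m(\vp+\psi))$, use the Guan--Zhou strong openness theorem to gain a slack $\ep$ in the exponent of $\vp$, split with H\"older using conjugate exponents $p=(m+\ep)/m$ and $q=(m+\ep)/\ep$, and invoke Skoda's integrability lemma on $\psi$ (with zero Lelong numbers) to control the second factor. The only cosmetic differences are that the paper normalizes to $m=1$ at the outset and does not spell out the trivial reverse inclusion, which you do.
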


\begin{proof}
 We may look at an open bounded domain $\Omega \subset X$ and may assume that $m=1$.  Suppose that $f$ is holomorphic on $\Omega$ and $f \in
 \JJ(\vp)$. It suffices to show that $f \in \JJ(\vp + \psi)$. From the openness theorem \cite{GZ}, we have $\JJ_+ (\vp) = \JJ
 (\vp)$. Hence there exists $\ep > 0$ such that $f \in \JJ((1+\ep) \vp)$. Let $\frac{1}{p} := \frac{1}{1+\ep}$ and
 $\frac{1}{q} := \frac{\ep}{\ep + 1}$. From the H\"older inequality, 

 $$ \int_\Omega \abs{f}^2 e^{-\vp} e^{-\psi} dV \le  \left(  \int_\Omega  ( \abs{f}^{\frac{2}{p}} e^{-\vp} )^p
 dV  \right)^{\frac{1}{p}}
  \left( \int_\Omega  ( \abs{f}^{\frac{2}{q}}  e^{-\psi} )^q   dV   \right)^{\frac{1}{q}}   .$$ Here the RHS is finite since $f \in
  \JJ( (1+\ep) \vp)$ and the Lelong number of $\psi$ is zero at every point, using Skoda's lemma \cite{D} (5.6). 

\end{proof}

\noindent  We have immediate corollaries. Note that (2) is a special case of (1). 

\begin{corollary}

\begin{enumerate}

\item
 Conditions (B) and (C) (in the first page) are equivalent for $u$ and $v$ if $u-v$ is psh. 

\item
 
  If a psh function $u$ on $X$ has zero Lelong number at every point of $X$, then it has zero Lelong number at every point of all proper modifications over $X$. 
 
\end{enumerate}

\end{corollary}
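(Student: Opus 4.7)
The plan is to prove (1) first and derive (2) as the special case $v \equiv 0$. For (1), the direction (B) $\Rightarrow$ (C) is immediate, since (B) asserts equality of Lelong numbers at every point of every proper modification over $X$, in particular at every point of $X$ itself. The substance is therefore to prove (C) $\Rightarrow$ (B) under the extra hypothesis that $u - v$ is psh.

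For this implication, set $\psi := u - v$. This is psh by assumption, and by additivity of Lelong numbers for psh functions, $\nu(\psi, p) = \nu(u, p) - \nu(v, p)$, which vanishes at every $p \in X$ by (C). Applying Proposition~\ref{zero} with $\vp := v$ and the above $\psi$ then gives
\[
 \JJ(mu) \,=\, \JJ(m(v + \psi)) \,=\, \JJ(mv) \qquad \text{for every real } m > 0,
\]
which is precisely condition (B-1). To pass from (B-1) to the full (B), I would invoke the nontrivial equivalence (B-1) $\Leftrightarrow$ (B-2) already recorded in the introduction (obtained by combining the theorem of Boucksom--Favre--Jonsson with the Guan--Zhou strong openness theorem), which then yields (B-2) and completes (B).

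For (2), take $v \equiv 0$ in (1). Then $u - v = u$ is psh, and both $u$ and the zero function have zero Lelong number at every point of $X$, so hypothesis (C) holds trivially. Part (1) therefore delivers (B-2) for the pair $(u, 0)$, which asserts that $u$ and $0$ have the same (necessarily zero) Lelong number at every point of every proper modification over $X$, as claimed. Beyond the cited BFJ/GZ equivalence, the only ingredients are the standard additivity of Lelong numbers on psh functions and Proposition~\ref{zero}; there is no substantial obstacle, and the argument is essentially a bookkeeping reduction.
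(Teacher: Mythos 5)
Your argument is correct and is essentially the reconstruction the paper intends when it labels these "immediate corollaries'' of Proposition~\ref{zero}: you use additivity of Lelong numbers to see that $\psi := u-v$ has zero Lelong numbers everywhere, invoke Proposition~\ref{zero} to obtain (B-1), pass to (B-2) via the BFJ/Guan--Zhou equivalence already recorded in the introduction, and then specialize to $v\equiv 0$ for part (2). No gaps.
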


 The following example  shows that the conditions (B) and (C) are indeed inequivalent, in general. 
 
\begin{example}(Jonsson \cite{J})\label{jonsson}

  On $\CC^2$ with coordinates $(x,y)$, let $u = \log \max \{ \abs{x}, \abs{y} \}$, i.e. the psh function associated to the ideal generated by $x$ and $y$. Also let $v = \log \max \{ \abs{x}^2, \abs{y} \}$. Then $u$ and $v$ have the same Lelong numbers everywhere on $\CC^2$: $1$ at the origin and $0$ elsewhere. Now let $\pi : X \to \CC^2$ be the blowup of the origin and let $E \subset X$ be the exceptional divisor. Then $\pi^* u$ has Lelong number $1$ at every point of $E$ while $\pi^* v$ has Lelong number $2$ at one point of $E$.

\end{example}

 If we also consider functions that are given as the difference of two psh functions, we may define their Lelong number at a point in the following obvious way: $\nu(\psi_1 - \psi_2, x) := \nu(\psi_1, x) - \nu(\psi_2, x)$.  In this generality, the conclusion of Proposition~\ref{zero} is not true any more, for example when we take $\psi = - 2 \vp_0$ from the above example. Then $\JJ ( \vp - 2 \vp_0)$ is the analytic adjoint ideal sheaf defined in \cite{G} and it is different from $\JJ(\vp)$ in general.  It also follows that the above Skoda's lemma does not hold in this generality of difference of two psh functions.

\section{Singular hermitian metrics of Siu-type}

 Toward defining a Siu-type metric in this section, we first revisit the basic properties of singular hermitian metrics defined by sections of a line bundle in a rather careful manner, since that was how we came to discover the subtlety of the definition of a Siu-type metric.  Let $L$ be a line bundle and $h_1= e^{-\varphi_1}$ and $h_2= e^{-\varphi_2}$ two singular hermitian metrics. Following the usual convention, we often use $\vp_1$ to refer to the metric $h_1$.  Also following \cite[Definition 0.5]{D14}, we say $h_1 = e^{-\varphi_1}$ is \textbf{less singular} than $h_2 = e^{-\varphi_2}$ and write $\vp_1 \lsm \vp_2$ and $h_1 \lsm h_2$ if the local weight functions satisfy $\vp_2 \le \vp_1 + O(1)$. We say $\vp_1$ and $\vp_2$ have \textbf{equivalent singularities} and write $\vp_1 \sim \vp_2$ if $\vp_1 \lsm \vp_2$ and $\vp_2 \lsm \vp_1$.

 Let $g_1, \cdots, g_k \in H^0(X, L)$ be holomorphic sections and $V$ the subspace spanned by them. These sections define a singular metric $e^{-\vp}$ where $\vp = \log (\abs{g_1}^2 + \cdots + \abs{g_k}^2)$ (the equality here understood as usual, either by local frames or by using an auxiliary smooth metric of $L$, see \cite[Example 3.14]{D}). 

\begin{proposition}\label{equiv}

 Let $W \subset V \subset H^0(X, L)$ be subspaces.

\begin{enumerate}

\item

 Suppose that $W = \spann \{w_1, \cdots, w_k \}$ and $V = \spann \{ v_1, \cdots, v_l \}$. Then we have  $$\log (\abs{w_1}^2 + \cdots + \abs{w_k}^2)    \gsm \log(\abs{v_1}^2 + \cdots + \abs{v_l}^2) .$$

\item

 The metric defined by the choice of a spanning set of $V$ as in (1) is unique up to equivalence.

\end{enumerate}

\end{proposition}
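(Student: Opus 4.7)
The plan is to reduce both parts to the elementary observation that a linear combination of holomorphic sections is pointwise bounded, up to a multiplicative constant, by the sum of squared norms of the sections used to express it.

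For part (1), since $W\subset V$, each generator $w_i$ of $W$ can be written as a linear combination $w_i=\sum_{j=1}^{l}a_{ij}v_j$ with constants $a_{ij}\in\CC$. Fix a local trivialization of $L$ on a coordinate chart (equivalently, work with an auxiliary smooth hermitian metric as in the parenthetical remark in the statement, cf.\ \cite[Example 3.14]{D}); the sections $w_i$ and $v_j$ then correspond to holomorphic functions $f_i$ and $g_j$ satisfying the same linear relation $f_i=\sum_{j}a_{ij}g_j$. Applying the Cauchy–Schwarz inequality gives
\[
\abs{f_i}^2 \;\le\; \Bigl(\sum_{j}\abs{a_{ij}}^2\Bigr)\Bigl(\sum_{j}\abs{g_j}^2\Bigr),
\]
and summing over $i$ yields $\sum_{i}\abs{f_i}^2\le C\sum_{j}\abs{g_j}^2$ with $C=\sum_{i,j}\abs{a_{ij}}^2$. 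Taking logarithms locally, and absorbing into $O(1)$ the bounded contribution of the smooth metric / trivialization, I obtain
\[
\log\bigl(\abs{v_1}^2+\cdots+\abs{v_l}^2\bigr)\;\le\;\log\bigl(\abs{w_1}^2+\cdots+\abs{w_k}^2\bigr)+O(1),
\]
which is exactly the statement $\log(\abs{w_1}^2+\cdots+\abs{w_k}^2)\gsm\log(\abs{v_1}^2+\cdots+\abs{v_l}^2)$ in the sign convention fixed just before the proposition.

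For part (2), suppose $\{v_1,\dots,v_l\}$ and $\{v'_1,\dots,v'_m\}$ are two spanning sets of the same subspace $V$. Apply part (1) twice, once with $W=V$ spanned by the $v_i$'s inside $V$ spanned by the $v'_j$'s, and once with the roles reversed. This produces inequalities in both directions between the two local weight functions, i.e.\ the associated singular metrics are $\sim$-equivalent.

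I do not anticipate any real obstacle: the entire content is the Cauchy–Schwarz bound for a linear change of generators, together with the triviality that a change of smooth reference metric (or of local trivialization of $L$) alters the weight by a locally bounded function. The only bookkeeping point worth care is that the constant $C$ depends on the coefficients $a_{ij}$ only and not on the point of $X$, so the estimate extends from a local chart to the whole manifold once we have fixed a smooth background metric of $L$.
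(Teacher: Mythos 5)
Your proof is correct and follows the same route as the paper: express each $w_i$ as a linear combination of the $v_j$, apply Cauchy--Schwarz to get $\abs{w_i}^2 \le (\sum_j\abs{a_{ij}}^2)(\sum_j\abs{v_j}^2)$, and deduce (2) from (1) by taking $W=V$ with two different spanning sets. The only cosmetic difference is that you spell out the passage through local trivializations and the summation over $i$; the paper leaves both implicit.
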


\begin{proof}

 (2) follows from (1) by taking $W = V$ and letting $\{ v_i \}$ and $\{ w_j \}$ be two arbitrary spanning sets. For (1), first note that $k \le l$ since $W \subset V$. For each $i$, one can write $w_i = \sum a_{ij} v_j$. Then Cauchy-Schwarz gives $\abs{w_i}^2 \le (\sum_j \abs{a_{ij}}^2)  (\sum_j \abs{v_j}^2) $, hence (1).

\end{proof}

 Now let $L$ be a line bundle on a compact complex manifold and suppose that $L$ is \textit{$\QQ$-effective}, i.e. for some $k \ge 1$, the tensor power $kL$ has some nonzero global holomorphic section. For such $L$, holomorphic sections $g_1, \cdots, g_p$ of a multiple $kL$ define a singular hermitian metric for $L$ in the following two ways: $ \vp_1 =  \frac{1}{k} \log (\sum \abs{g_i}^2) $ and $ \vp_2 =  \log (\sum \abs{g_i}^{\frac{2}{k}}) $. These two have equivalent singularities since we have \cite{F05} the following elementary inequalities from H\"{o}lder:

\begin{equation}\label{fujino}
 \frac{1}{\sqrt[k]{p}}  (\sum_{i=1}^p \abs{g_i}^2)^{\frac{1}{k}} \le  \sum_{i=1}^p \abs{g_i}^{\frac{2}{k}}   \le \frac{p}{\sqrt[k]{p}} (\sum_{i=1}^p \abs{g_i}^2)^{\frac{1}{k}}.
\end{equation}

\noi In view of this and Proposition~\ref{equiv}, we propose the following convenient 

\noi \textbf{Notation.} Given a subspace
 $V \subset H^0(X, L)$, the metric defined by a basis $\{g_1, \cdots, g_k \}$ of $V$ and its equivalence class is denoted by $\log \abs{V}^2$. Similarly the metric for $L$ defined by a basis of a subspace $V \subset H^0(X, kL)$ and its equivalence class is denoted by $\log \abs{V}^{\frac{2}{k}}$.
 
 One drawback of this notation is that when we define a Siu-type metric below using an infinite sum, the ambiguity of equivalence class cannot be allowed because of convergence.  So in that case, we understand the notation $\log \abs{V}^{\frac{2}{k}}$ as referring to a specific choice of a basis of $V$ as in  $$\log \abs{V}^{\frac{2}{k}} =\log \left( \sum_{i=1}^p \abs{g_i}^{\frac{2}{k}} \right)    .$$

\noi Now let $R(L)$ be the section ring of a line bundle $L$ and  let $S$ be a graded linear system~\cite[Definition 2.4.1]{L} of $L$ : 

  $$S := \oplus_{k \ge 0} S_k \subset R(L) := \oplus_{k \ge 0} H^0 (X, kL)  $$ on a compact complex manifold.  One can then define a metric for the line bundle $L$ by considering all graded pieces $S_k  (k \ge 1)$ at the same time. Let $(L, g)$ be an auxiliary choice of a smooth hermitian metric without curvature condition. Let $C_k := \sup_X   \abs{S_k}_g^{\frac{2}{k}}  $ for each $k \ge 1$. We say a sequence of positive real numbers $( \ep_k )_{k \ge 1}$ is \textit{admissible} if $\sum_{k \ge 1} \ep_k C_k $ converges. Clearly, this property is independent of the choice of $g$. 

 \begin{definition}[Siu~\cite{S98}]\label{series}
 
 Let $S \subset R(L)$ be a graded linear system of a line bundle $L$ on a compact complex manifold as above. With an admissible choice of a sequence  of positive numbers $( \ep_k )_{k \ge 1}$,  the infinite series

 $$\rho_S = \log \left( \sum_{k \ge 1}  \ep_k  \abs{S_k}^{\frac{2}{k}} \right) $$ gives a singular hermitian metric $(L,e^{-\rho_S})$ whose local weight functions are psh, and we call $e^{-\rho_S}$ a \textbf{Siu-type metric} of $S$ for $L$.

\end{definition}

\noi  In terms of local trivializations of $L$, each local weight function in the place of $\rho_S$  is psh since it is the upper envelope of the uniformly bounded above family of psh functions given by
the partial sums (converging uniformly over compact subsets).

\begin{example}\label{siuzero}

 In fact, in Definition~\ref{series}, we do not really need each subspace $S_k$ of $H^0 (X, kL)$ to form a graded linear system in order for $\rho_S$ to make sense. Also, as far as the vector spaces $S_k$ are finite dimensional, the definition works in a local setting as well. For example, for an admissible choice of coefficients, $\vp = \log ( \sum_{k \ge 1}  \ep_k  \abs{z_1}^{\frac{2}{k}} )$ gives a psh function in a neighborhood of the origin in $\CC^n$ (with coordinates $(z_1, \cdots, z_n)$) whose Lelong numbers are zero everywhere.  

\end{example}

 We note that, a priori, different choices of  $( \ep_k )_{k \ge 1}$  could yield different Siu-type metrics even up to equivalence of singularities. The main theorem  of this section says that this is indeed the case,  precisely when the section ring is not finitely generated.
  On the other hand  in the case of finite generation, Siu-type metric is uniquely determined up to equivalence of singularities, independently of the choice of  $( \ep_k )_{k \ge 1}$.  This is well known in the following result, at least in the case of a full graded linear system, i.e. $S = R(L)$.  

\begin{proposition}\label{partial} \cite[Lemma 6.6]{BEGZ} cf.  \cite[Theorem 3.7]{S06}

 The section ring $S = R(L)$ of a $\QQ$-effective line bundle $L$ is finitely generated if and only if every Siu-type metric $\rho_S = \log \left( \sum_{k \ge 1}  \ep_k  \abs{S_k}^{\frac{2}{k}} \right)$  of $L$ is equivalent to a finite sum  $\log \left( \sum^{k_0}_{k \ge 1}  \delta_k  \abs{S_k}^{\frac{2}{k}} \right)$ for some $k_0 \ge 1$ and coefficients $\delta_k > 0$. 

\end{proposition}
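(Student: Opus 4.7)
The plan is to prove both directions by comparing the Siu-type metric $\rho_S$ with the ``generator metric'' $\rho_F := \log \sum_{i=1}^N \abs{s_i}^{2/k_i}$, where $s_1, \ldots, s_N \in R(L)$ are a putative finite generating set of degrees $k_i \leq k_0$. By Proposition~\ref{equiv}, $\rho_F$ is equivalent to any finite partial sum $\log \sum_{k \leq k_0} \delta_k \abs{S_k}^{2/k}$ provided the $s_i$ span $\bigoplus_{k \leq k_0} S_k$, so the proposition reduces to proving $\rho_S \sim \rho_F$ if and only if $s_1, \ldots, s_N$ generate $R(L)$ as a ring. One may choose a monomial basis of each $S_k$ in the Siu-type series (any admissible choice yields an equivalent metric, so this incurs no loss of generality).

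For the ``$\Rightarrow$'' direction, the inequality $\rho_S \lsm \rho_F$ is immediate from Proposition~\ref{equiv} applied to $\spann\{s_i\} \subset S_{k_i}$, yielding $\abs{s_i}^{2/k_i} \leq c_i \abs{S_{k_i}}^{2/k_i}$ and hence $e^{\rho_F} \leq C\, e^{\rho_S}$. The reverse $\rho_F \lsm \rho_S$ is the substantive step and rests on the weighted AM--GM inequality: for any monomial $s^\alpha = s_1^{a_1}\cdots s_N^{a_N}$ of weighted degree $k = \sum_i a_i k_i$,
\[
\abs{s^\alpha}^{2/k} = \prod_i \bigl( \abs{s_i}^{2/k_i} \bigr)^{a_i k_i / k} \leq \sum_i \frac{a_i k_i}{k}\, \abs{s_i}^{2/k_i} \leq e^{\rho_F}.
\]
Since a basis of $S_k$ can be chosen as a subset of such monomials and has cardinality $d_k = \dim S_k$, this gives $\abs{S_k}^{2/k} \leq d_k \cdot e^{\rho_F}$ pointwise. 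Evaluating at a point $p$ outside $\bigcup_i \{s_i = 0\}$, where $\abs{s_i(p)} \geq \delta > 0$ for all $i$, yields $\abs{S_k(p)}^{2/k} \geq d_k \cdot \delta^{2/\min_i k_i}$, whence $d_k \leq C_k / \delta'$ with $C_k := \sup_X \abs{S_k}^{2/k}$. The admissibility of $(\ep_k)$ then gives $\sum_k \ep_k d_k \leq (1/\delta') \sum_k \ep_k C_k < \infty$ and therefore $e^{\rho_S} \leq C'\, e^{\rho_F}$, completing the inequality.

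For the converse direction, assume every Siu-type metric is equivalent to some finite partial sum. Taking $s_1, \ldots, s_N$ to be a basis of $\bigoplus_{k \leq k_0} S_k$, the hypothesis gives $\rho_S \sim \rho_F$, from which one extracts the pointwise bound $\abs{s}^{2/k} \leq C\, e^{\rho_F}$ for every $s \in S_k$ and every $k > k_0$. I would then use the equality of multiplier ideal sheaves $\JJ(m \rho_S) = \JJ(m \rho_F)$ for all $m$ (a consequence of equivalent singularities) together with a Noetherian argument on graded pieces to upgrade this analytic norm bound to the algebraic assertion that $s$ is a polynomial in $s_1, \ldots, s_N$ of weighted degree $k$, forcing $R(L)$ to be finitely generated. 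The main obstacle is precisely this algebraic-from-analytic step, which requires the full power of the multiplier ideal machinery and is where any simplification via direct comparison of norms breaks down.
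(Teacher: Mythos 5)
The paper does not prove Proposition~\ref{partial} itself; it cites \cite[Lemma 6.6]{BEGZ} and remarks that the argument of \cite[Theorem 3.7]{S06}, which hinges on Skoda's $L^2$ division theorem, extends from the canonical bundle to a general line bundle. Your forward direction (finitely generated $\Rightarrow$ equivalence to a partial sum) via weighted AM--GM on monomials in a finite generating set, together with the dimension bound $d_k \le C_k/\delta'$ obtained by evaluating at a point off $\bigcup_i\{s_i=0\}$, is a nice elementary route and appears essentially correct, modulo one caveat: you assert that replacing the implicit basis of each $S_k$ by a monomial basis is harmless because ``any admissible choice yields an equivalent metric.'' That is not automatic -- Proposition~\ref{equiv} gives a $k$-by-$k$ equivalence with a constant $\lambda_k$ controlled by the transition matrix, and nothing a priori prevents $\lambda_k$ from diverging; the paper's own discussion after the Notation paragraph is explicitly cautious on this point. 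You would need to check that the admissibility of $(\ep_k)$ (which is defined in terms of the chosen basis via $C_k$) is exactly what compensates for these transition constants, rather than waving it away.

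The genuine gap is the converse direction, and you say so yourself. The proposed route -- extract a pointwise bound $\abs{s}^{2/k} \le C\, e^{\rho_F}$, pass to equality of multiplier ideals, and then invoke ``a Noetherian argument on graded pieces'' -- does not close: equality of $\JJ(m\rho_S)$ and $\JJ(m\rho_F)$ for all $m$ records only the valuative data of the singularity and does not by itself produce a division $s = \sum g_i s_i$ in the section ring; Noetherianity of the graded ring is what you are trying to \emph{prove}, so it cannot be fed in as a hypothesis. The missing idea is precisely Skoda's division theorem (the tool the paper points to via \cite{S06}): the equivalence $\rho_S \sim \rho_F$ gives an $L^2$-integrability condition for any $s \in S_k$ against a slightly boosted power of the metric defined by the $s_i$, and Skoda's theorem then produces holomorphic $g_i$ with $s = \sum_i g_i s_i$, i.e.\ membership in the ideal generated by the $s_i$. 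Iterating on degree shows every $S_k$ is generated by products of the $s_i$, which is exactly finite generation. Without Skoda (or some comparable division/extension theorem), there is no mechanism in your sketch to convert the norm inequality into an algebraic relation.
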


 We remark that the method of \cite[Theorem 3.7]{S06} (using Skoda division theorem) for a canonical line bundle can be generalized to a general line bundle $L$ to give a proof of this proposition.

 Now we show that the definition of a Siu-type metric indeed depends on the choice of the coefficients $( \ep_k )_{k \ge 1}$ in general. 
Thus we cannot refer to it as \textit{the} Siu-type metric of $L$ even up to equivalence of singularities. 

\begin{theorem}\label{siusiu}

 Let $L$ be a $\QQ$-effective line bundle such that the section ring $R(L) =: S$ is not finitely generated.  If $e^{\rho_1} = \sum_{k \ge 1}  \ep_k  \abs{S_k}^{\frac{2}{k}} $ is a given Siu-type metric of $L$, there always exists another Siu-type metric $e^{\rho_2 }= \sum_{k \ge 1}  \al_k  \abs{S_k}^{\frac{2}{k}} $ such that $\rho_2$ is more singular than $\rho_1$ and strictly so, i.e. $\rho_2$ and $\rho_1$ do not have equivalent singularities.

\end{theorem}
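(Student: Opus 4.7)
The strategy is to keep the same graded pieces $S_k$ but dampen the coefficients $\ep_k$ by a factor that decays to zero. Shrinking the coefficients automatically makes the resulting metric pointwise smaller and hence more singular, so the real content is to show the dampening produces a \emph{genuinely} different equivalence class. This is where the non-finite-generation of $R(L)$ must be used, and it enters through Proposition~\ref{partial}.

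First I would translate the hypothesis into a quantitative statement about partial sums. Set $\sigma_n := \log\bigl(\sum_{k=1}^n \ep_k |S_k|^{2/k}\bigr)$. By Proposition~\ref{partial}, non-finite-generation of $R(L)$ prevents $\rho_1$ from being equivalent to any such $\sigma_n$ (if it were, the geometric bound $|S_m|^{2/m}\leq D_m \sum_{k\leq n}|S_k|^{2/k}$ together with Skoda division would force $R(L)$ to be finitely generated, as in the method referenced after Proposition~\ref{partial}). Since $\sigma_n\leq\rho_1$ pointwise, the failure of equivalence means $\sup_X(\rho_1-\sigma_n)=+\infty$. Hence for each $n\geq 1$ I can pick a point $x_n\in X$ satisfying
$$\sum_{k=1}^n \ep_k\,|S_k(x_n)|^{2/k}\;\leq\;\tfrac{1}{n}\sum_{k\geq 1}\ep_k\,|S_k(x_n)|^{2/k}. \qquad (*)$$

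Next, define $\al_k:=\ep_k/k$ and $\rho_2:=\log\bigl(\sum_{k\geq 1}\al_k |S_k|^{2/k}\bigr)$. Admissibility is automatic: $\sum_k \al_k C_k\leq \sum_k \ep_k C_k<\infty$. Since $\al_k\leq \ep_k$, one has $\rho_2\leq\rho_1$ everywhere, so $\rho_2$ is more singular than $\rho_1$ in the sense of the paper. To verify the strictness, split the sum for $\rho_2$ at $x_n$ into $k\leq n$ and $k>n$:
$$e^{\rho_2(x_n)}\;=\;\sum_{k\leq n}\tfrac{\ep_k}{k}|S_k(x_n)|^{2/k}+\sum_{k>n}\tfrac{\ep_k}{k}|S_k(x_n)|^{2/k}\;\leq\;\sum_{k\leq n}\ep_k|S_k(x_n)|^{2/k}+\tfrac{1}{n+1}\,e^{\rho_1(x_n)},$$
and using $(*)$ on the first term yields $e^{\rho_2(x_n)}\leq \bigl(\tfrac{1}{n}+\tfrac{1}{n+1}\bigr)\,e^{\rho_1(x_n)}$. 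Therefore $\rho_1(x_n)-\rho_2(x_n)\to +\infty$, showing $\rho_1\not\sim\rho_2$.

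The main obstacle is really packaged inside Proposition~\ref{partial}: the assertion that $\rho_1$ fails to be equivalent to any of its finite truncations $\sigma_n$ is precisely the non-trivial content, resting on the Skoda division argument of \cite{S06}, \cite{BEGZ}. Once that input is granted, the identity $(*)$ is a direct reformulation and the rest is a one-line splitting estimate; any decay schedule with $\al_k/\ep_k\to 0$ and $\al_k\leq \ep_k$ would work in place of $1/k$.
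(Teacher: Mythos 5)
Your proof is correct and follows essentially the same route as the paper's: invoke Proposition~\ref{partial} to produce points where a partial sum is much smaller than $\rho_1$, dampen the coefficients, and split the series at those points. The one cosmetic difference is the tail estimate — by pulling out $\max_{k>n}\alpha_k/\ep_k$ and using $\sum_{k>n}\ep_k e^{\phi_k}\le e^{\rho_1}$ in one stroke, you only need $\alpha_k/\ep_k\to 0$, whereas the paper bounds each $\ep_k e^{\phi_k}\le e^{\rho_1}$ separately and therefore imposes the slightly stronger condition $\sum_k\alpha_k/\ep_k<\infty$.
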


\begin{proof}

We follow the argument of the proof for  \cite[Proposition 6.5]{BEGZ}, Proposition 6.5 replacing a metric with minimal singularities $\psi$ therein by a Siu-type metric.

 For convenience, let us denote $ e^{\phi_k} := \abs{S_k}^{\frac{2}{k}}$ and thus $e^{\rho_1} = \sum_{k \ge 1}  \ep_k  e^{\phi_k}$. Given $\sum_{k
 \ge 1} \ep_k $, choose another admissible convergent series of positive numbers $\sum_{k \ge 1} \alpha_k$ such that $\al_k < \ep_k$ and  the series $\sum_{k \ge 1}
 \frac{\al_k}{\ep_k}$ is convergent. Let $e^{\rho_2} = \sum_{k \ge 1}  \al_k  e^{\phi_k}$. Clearly  $\rho_2 \gsm \rho_1$.  We will show that $\rho_2$ is strictly more singular than $\rho_1$.

  Since $R(L)$ is not
 finitely generated, Proposition~\ref{partial} says that none of the partial sum metric $\sum^k_{i= 1}  \al_i  e^{\phi_i}$ can have equivalent singularities with the Siu-type metric $e^{\rho_1}$ for every $k \ge 1$. Thus the function defined by the quotient of metrics $\left( \sum^k_{i= 1} \al_i  e^{\phi_i} \right)  / e^{\rho_1} $ is not bounded away from zero (while bounded from above).

 Therefore, for
 any given increasing sequence of positive numbers $(C_k)_{k \ge 1}$, we can find a sequence of points in $X$,
 $(x_k)_{k \ge 1}$ such that when evaluating at the point $x_k$, we have the inequality
 
  $$ \left( \sum^k_{i= 1} \al_i  e^{\phi_i (x_k)} \right)   \le e^{\rho_1 (x_k)}  e^{ - C_k}.$$   
  
\noi Again note that this and the next inequalities make sense when we consider the quotient of hermitian metrics on both sides as a function.  It follows that (for every $k \ge 1$)
 
\begin{align*}
  e^{\rho_2 (x_k) } \le e^{\rho_1 (x_k)}  e^{ - C_k} + \sum_{i  > k} {\al_i} e^{\phi_i (x_k)}   \le e^{\rho_1 (x_k)}  e^{ - C_k} + \sum_{i  > k} \frac{\al_i}{\ep_i}  e^{\rho_1 (x_k)}
\end{align*}

\noi since $\ep_i e^{\phi_i} \le  \sum_{i \ge 1} \ep_i e^{\phi_i}  = e^{\rho_1} $. Now let

 $$ \delta_k := \sum_{i > k} \frac{\al_i}{\ep_i} .$$

\noi  If we choose $C_k \to \infty$ fast enough to ensure that  $e^{- C_k} \le \delta_k$,
then we
 have   $  \rho_2 (x_k) - \rho_1 (x_k) \le \log 2 \delta_k $. Since $\delta_k \to 0$ as $k \to \infty$,  the two singular hermitian metrics $\rho_1$ and $\rho_2$ cannot have equivalent
 singularities.

\end{proof}

\begin{corollary}\label{siuco}

If $L$ is a $\QQ$-effective line bundle such that the section ring $R(L)$ is not finitely generated, then there exists an infinite family of Siu-type metrics for $L$ such that none of them have equivalent singularties.

\end{corollary}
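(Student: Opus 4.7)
The plan is to apply Theorem~\ref{siusiu} iteratively to build a strictly descending chain (in the $\lsm$ order) of Siu-type metrics. Since $\lsm$ is a preorder whose induced equivalence relation is precisely "equivalent singularities," any strictly descending chain automatically has the property that no two of its members are equivalent: if $\rho_i \prec \rho_j$ strictly (with $j > i$), then $\rho_i \not\sim \rho_j$.

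More concretely, start with any Siu-type metric $\rho_1 = \log \left( \sum_{k \ge 1} \ep^{(1)}_k \abs{S_k}^{2/k} \right)$ (for some admissible coefficients $\ep^{(1)}_k > 0$). Since $R(L)$ is not finitely generated, Theorem~\ref{siusiu} applies to $\rho_1$ and produces a Siu-type metric $\rho_2 = \log \left( \sum_{k \ge 1} \ep^{(2)}_k \abs{S_k}^{2/k} \right)$ with $\rho_2 \gsm \rho_1$ strictly. The hypothesis of Theorem~\ref{siusiu} is just that $R(L)$ is not finitely generated and that we are given an input Siu-type metric; it does not care which coefficients we started with. Thus we can feed $\rho_2$ back into Theorem~\ref{siusiu} to produce $\rho_3$ strictly more singular than $\rho_2$, and so on. Recursively this yields an infinite sequence $\{\rho_i\}_{i \ge 1}$ of Siu-type metrics for $L$ with $\rho_1 \lsm \rho_2 \lsm \rho_3 \lsm \cdots$, each step strictly.

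To verify that no two members of $\{\rho_i\}$ have equivalent singularities, suppose $i < j$. Transitivity of $\lsm$ gives $\rho_i \lsm \rho_j$, and strictness at step $i+1$ forces $\rho_j \not\lsm \rho_i$ (otherwise $\rho_i \sim \rho_{i+1}$ would follow by squeezing, contradicting the strictness given by Theorem~\ref{siusiu}). Hence $\rho_i$ and $\rho_j$ are inequivalent, and the family $\{\rho_i\}_{i \ge 1}$ is the desired infinite family.

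There is essentially no obstacle once Theorem~\ref{siusiu} is in hand: the only small point to verify is the squeezing argument above, which is immediate from the definition of $\sim$ as $\lsm$ in both directions. In particular, no new analytic input beyond Theorem~\ref{siusiu} is required.
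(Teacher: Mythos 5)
Your proof is correct and is precisely the argument the paper leaves implicit: the corollary is stated immediately after Theorem~\ref{siusiu} without proof because iterating the theorem to build a strictly $\lsm$-increasing chain $\rho_1 \lsm \rho_2 \lsm \cdots$ and noting that any two members of such a chain are inequivalent (by the squeezing observation you spell out) is the obvious route. No new analytic content is required beyond Theorem~\ref{siusiu}, as you say.
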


\section{Finite generation and analytic singularities}  

 A line bundle $L$ on a compact complex manifold is pseudoeffective if and only if $L$ carries at least one singular hermitian metric with nonnegative curvature current. For a pseudoeffective line bundle, the \textit{singular metric with minimal singularities} $h_{\min} = e^{-\vp_D}$ always exists~\cite{D}, unique up to equivalence of singularities. When $L$ is moreover $\QQ$-effective, a Siu-type metric $\vp_S := \rho_S$ is defined and satisfies $\vp_D \lsm \vp_S$. If the section ring of $L$ is finitely generated, then \cite{BEGZ} shows that $\vp_D \sim \vp_S$ and they have analytic singularities. 
  
  In this section, we study the case when the section ring of $L$ is not finitely generated. In this case, from Theorem~\ref{siusiu} and \cite{BEGZ}, we get a sequence of inequivalent (classes) of singular hermitian metrics: 
  
  $$ \vp_D \lsm \vp_{S,1} \lsm \vp_{S, 2} \lsm \cdots .$$ (Note that $\vp_{S, i}$ does not necessarily exhaust all possible Siu-type metrics of $L$.) Because of the inequivalence, at most one of them can have analytic singularities. Using Theorem~\ref{thm2}, we will show that only $\vp_D$ can possibly have analytic singularities. In fact, for $\vp_D$, both of the two cases can happen. 
 
\begin{example}
(A line bundle for which the section ring is not finitely generated and the metric $\vp_D$ \textit{does not have analytic singularities}.)  
 See \cite[Example 5.4]{BEGZ} (and \cite[Example 5.2]{F13}) for such a (nef and big) line bundle $L$ whose $\vp_D$ has zero Lelong numbers everywhere but not locally bounded. In particular, $L$ is not semipositive. A similar example was given in  \cite{K05}, \cite[Example 2.14]{K07} where only non-semipositivity was checked.

\end{example}

\begin{example}
(A line bundle for which the section ring is not finitely generated and the metric $\vp_D$ \textit{does have analytic singularities}.)   \cite[Example 5.4, Example 5.8]{F13} each gives a semipositive line bundle $L$ whose section ring is not finitely generated. 

\end{example}

 We also note that it is shown in \cite{M} that if the minimal metric $\vp_D$ has analytic singularities, then the line bundle $L$ admits a birational Zariski decomposition. 

 We now turn to Siu-type metrics. The first example of a non-finitely generated section ring $R(L)$ due to Zariski \cite[2.3.A]{L} is worth mentioning: the line bundle $L$ on $X$ of dimension $2$ has the property that there exists a smooth curve $C \subset X$ such that the linear system $| mL - C|$ is free for every $m \ge 1$. Therefore a Siu-type metric $\vp_S$ locally looks like Example~\ref{siuzero}. In particular, the polar set of $\vp_S$ is $C$ but $\vp_S$ has zero Lelong number everywhere. So $\vp_S$ does not have analytic singularities since it is not locally bounded.
 
 In general,  we will show that whenever $L$ is a line bundle whose section ring is not finitely generated, a Siu-type metric has non-analytic singularities.  In particular, we obtain many new examples of psh functions with non-analytic singularities. We need the following theorem. 
 
 \begin{theorem}\label{thm2}

 Let  $u$ and $v$ be two psh functions (or singular hermitian metrics) on a complex manifold $X$ such that   they have the same Lelong numbers on every proper modification over $X$. If $u$ has analytic singularities, then $u$ is {less singular} than  $v$. 

\end{theorem}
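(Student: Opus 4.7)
The plan is to reduce to a log resolution of the ideal sheaf defining the analytic singularities of $u$, and then apply Siu's divisorial decomposition of the closed positive $(1,1)$-current $dd^c(\pi^*v)$ locally on the resolution. A key preliminary remark is that the relation $\lsm$ is both preserved and reflected under pullback along a proper modification, so it suffices to establish the comparison on a convenient birational model.

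Concretely, by Definition~\ref{analy} I would write locally $u = c \log \sum_{i=1}^{N} \abs{f_i}^2 + O(1)$ with $c > 0$, set $\JJ = (f_1, \ldots, f_N)$, and take a log resolution $\pi \colon X' \to X$ of $\JJ$, so that $\pi^{-1}\JJ \cdot \OO_{X'} = \OO_{X'}(-E)$ with $E = \sum_i a_i E_i$ a simple normal crossings divisor. Then $\pi^*u = c \log \abs{s_E}^2 + O(1)$ in local trivializations on $X'$, and the task reduces to proving $\pi^*v \lsm \pi^*u$ on $X'$.

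Next, fix $y \in X'$ and pick local coordinates $(z_1,\ldots,z_n)$ centred at $y$ so that the components of $E$ through $y$ are $E_{i_j} = \{z_j = 0\}$ for $j=1,\ldots,r$. I would apply Siu's decomposition to $dd^c(\pi^*v)$ on a small polydisc $U_y$ around $y$. The hypothesis, specialised to the modification $\pi$, forces $\{\,\nu(\pi^*v,\cdot) > 0\,\} \cap U_y \subset E \cap U_y$, so the only prime divisors appearing in the divisorial part of the decomposition are the $E_{i_j}$, and their generic Lelong-number coefficients must equal those of $\pi^*u$, namely $c\, a_{i_j}$. Consequently, modulo a bounded pluriharmonic term,
\[ \pi^*v = \sum_{j=1}^r c\, a_{i_j} \log \abs{z_j}^2 + \psi' \]
on $U_y$, where $\psi'$ is a psh local potential for the residual current.

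Since $\psi'$ is psh it is locally bounded above, so combining with the local form of $\pi^*u$ yields $\pi^*v \le \pi^*u + O(1)$ on a neighbourhood of $y$; patching over $X'$ and using local compactness to aggregate constants gives $\pi^*v \lsm \pi^*u$, hence $v \lsm u$. The only delicate step is the identification of the divisorial part of the Siu decomposition of $dd^c(\pi^*v)$ with that of $dd^c(\pi^*u)$; this is precisely where the Lelong-number hypothesis is used, and, strikingly, only its specialisation to the single modification $\pi$ is needed — the full ``every proper modification'' version plays no further role in this argument.
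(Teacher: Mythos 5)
Your argument is correct, but it is genuinely different from the paper's. The paper reduces to the comparison of $u$ with $w := \sup(u,v)$ (so that $w \lsm u$ holds trivially and, by Lemma~\ref{same}, $w$ and $u$ still satisfy the Lelong-number hypothesis), pulls back to a log-resolution $\pi$ of $u$, and then invokes the Demailly approximation sequence of $\pi^* w$ together with the identity $\JJ(m\pi^* w) = \JJ(m\pi^* u)$; the latter identity rests on the equivalence of conditions (B-1) and (B-2), i.e.\ on the Boucksom--Favre--Jonsson valuative criterion combined with the Guan--Zhou strong openness theorem. You instead apply Siu's decomposition directly to $dd^c(\pi^* v)$ on the same log-resolution: the Lelong-number hypothesis on $X'$ forces the divisorial part to coincide with $c\,E$, and the residual part has a psh (hence locally bounded above) local potential, so $\pi^* v \le \pi^* u + O(1)$ locally, which descends to $v \le u + O(1)$. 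Your route bypasses both the BFJ machinery and the strong openness theorem, replacing them by the much older Siu decomposition theorem, and it also shows (as you observe) that the hypothesis is only needed on the single modification $\pi$ rather than on all proper modifications over $X$. The only slight overclaim in your write-up is that $\psi'$ is ``locally bounded''; what you actually have and need is that $\psi'$, being psh, is locally bounded \emph{above}, which suffices for the one-sided comparison.
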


\begin{corollary}\label{fgs}

 Let $X$ be a smooth complex projective variety  and let $L$ be a big line bundle on $X$.  The following are equivalent. 
 
 \begin{enumerate}[(a)]

  \item The section ring of $L$ is finitely generated.

 \item Every Siu-type singular hermitian metric of $L$ has analytic singularities.

 \item At least one Siu-type singular hermitian metric of $L$ has analytic singularities.   
 \end{enumerate}
 
\end{corollary}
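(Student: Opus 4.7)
The implication (a)$\Rightarrow$(b) follows from Proposition~\ref{partial}: finite generation makes every Siu-type $\rho_S$ equivalent to a finite partial sum $\log\sum_{k\le k_0}\delta_k|S_k|^{2/k}$. Using \eqref{fujino} to rewrite each $|S_k|^{2/k}$ as equivalent to $(\sum_i|g_{k,i}|^2)^{1/k}$ and then replacing the finite sum of positive continuous terms by their maximum (up to equivalence), multiplying through by $k_0!$ recasts the partial sum (up to equivalence) as $\frac{1}{k_0!}\log\sum_\al|F_\al|^2$ with $F_\al\in H^0(X,k_0!\,L)$ ranging over products of $k_0!/k$ basis sections of $S_k$; this has analytic singularities in the sense of Definition~\ref{analy}, and the property is preserved under equivalence. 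The implication (b)$\Rightarrow$(c) is immediate from Definition~\ref{series}.

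For (c)$\Rightarrow$(a), my plan is to show that an analytic-singularity Siu-type $\rho_S = \log\sum_k\ep_k|S_k|^{2/k}$ is equivalent to a finite partial sum $\rho_{S,N}$ of itself, whence Proposition~\ref{partial} yields finite generation of $R(L)$. Take a log resolution $\pi\colon Y\to X$ of the singular ideal of $\rho_S$, so that, after absorbing the constant $c$ from Definition~\ref{analy} into the divisor, $\pi^*\rho_S=\log|s_D|^2+u_0$ with $D$ an effective $\QQ$-divisor on $Y$ and $u_0$ bounded. The pointwise bound $\ep_k|\pi^*S_k|^{2/k}\le e^{\pi^*\rho_S}$ forces each $\pi^*g_{k,i}$ to vanish along every component $E$ of $D$ with multiplicity at least $k\cdot\mult_E D$, so we may factor $\pi^*g_{k,i}=s_D^k\,h_{k,i}$ with $h_{k,i}$ a regular section of $P:=\pi^*L-D$. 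Setting $H_k:=\spann\{h_{k,i}\}$ and $F_N:=\sum_{k\le N}\ep_k|H_k|^{2/k}$, one has $\pi^*\rho_{S,N}=\log|s_D|^2+\log F_N$ on $Y$.

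The key step is a uniform lower bound $F_N\ge c_0/2>0$ on $Y$ for some $N$. On $Y\setminus D$, a direct computation gives $F_\infty:=\lim_N F_N = e^{u_0}$, bounded above and below by positive constants. At $y\in D$, the comparison of Lelong numbers is decisive: since a bounded correction contributes zero Lelong number, $\nu(\pi^*\rho_S,y) = \mult_y D$, while the Siu-type form yields $\nu(\pi^*\rho_S,y) = \mult_y D + \inf_k\min_i\frac{1}{k}\mult_y h_{k,i}$; equating these forces the infimum on the right to vanish, so some $h_{k,i}(y)\ne 0$ and therefore $F_\infty(y) > 0$. Since $F_\infty$ is lower semicontinuous on the compact manifold $Y$ and strictly positive pointwise, $F_\infty\ge c_0 > 0$ uniformly on $Y$; a sequential compactness argument, using continuity of each $F_N$ and the monotone pointwise convergence $F_N\uparrow F_\infty$, then delivers $F_N\ge c_0/2$ uniformly for large $N$. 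Pulling back gives $\pi^*\rho_S\le\pi^*\rho_{S,N}+C$ on $Y$, which descends via surjectivity of $\pi$ to $\rho_S\le\rho_{S,N}+C$ on $X$; combined with the trivial $\rho_{S,N}\le\rho_S$ this yields $\rho_S\sim\rho_{S,N}$. The main technical obstacle is exactly the pointwise positivity $F_\infty > 0$ along $D$: on $Y\setminus D$ it comes for free from $u_0$ being bounded, whereas at points of $D$ it hinges on the Lelong-number matching above, which is the precise place where the analytic-singularity hypothesis on the Siu-type $\rho_S$ is used.
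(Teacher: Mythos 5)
Your (a)$\Rightarrow$(b)$\Rightarrow$(c) argument is fine (the paper just says it is ``clear''). For (c)$\Rightarrow$(a) you take a genuinely different route from the paper — a direct attempt to show that an analytic-singularity Siu-type metric must already be equivalent to a finite partial sum $\rho_{S,N}$, then invoke Proposition~\ref{partial}. The paper instead uses \cite[Theorem 1.11]{DEL} and the openness theorem of \cite{GZ} to get equality of all multiplier ideals $\JJ(m\vp_S)=\JJ(m\vp_D)$, then \cite{BFJ} for equality of Lelong numbers on all modifications, and finally Theorem~\ref{thm2} (whose proof runs through Demailly approximation) to obtain $\vp_S\lsm\vp_D$, hence $\vp_S\sim\vp_D$ and finite generation by \cite[Proposition 6.5]{BEGZ}.

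Your route has a genuine gap at the key step. First, the Lelong-number inference ``equating forces the infimum to vanish, so some $h_{k,i}(y)\ne0$'' is not valid: the infimum $\inf_k\min_i\tfrac{1}{k}\mult_y h_{k,i}$ can vanish as a limit without being attained (for instance if $\mult_y h_{k,i}$ stays equal to $1$ for all $k$, as in Example~\ref{siuzero} on a resolution), so it does not follow that any $h_{k,i}(y)\ne0$ and hence does not give $F_\infty(y)>0$ at $y\in D$. The boundedness of $u_0=\log F_\infty$ gives a lower bound only on $Y\setminus D$; lower semicontinuity of $F_\infty$ gives the inequality $\liminf_{y'\to y}F_\infty(y')\ge F_\infty(y)$, which is the wrong direction to transfer the bound to $D$. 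Second, even granting $F_\infty\ge c_0>0$ on all of $Y$, the passage to a uniform bound $F_N\ge c_0/2$ for some finite $N$ is a Dini-type argument and requires the \emph{limit} $F_\infty$ to be continuous, not merely lower semicontinuous; for an increasing sequence of continuous functions with only an LSC limit there is no uniform bound in general (the standard example $f_n(x)=\min(nx,1)\uparrow\mathbf{1}_{(0,1]}$ on $[0,1]$ shows this). In effect, what you are trying to prove pointwise is precisely that $\rho_S\sim\rho_{S,N}$, and this is the conclusion, not something that falls out of compactness. The paper's detour through Demailly approximation (in the proof of Theorem~\ref{thm2}) is exactly what replaces this missing uniformity, by working at the level of multiplier ideals / Lelong numbers on the resolution rather than with the pointwise partial sums $F_N$.
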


 We note that this is much stronger than Proposition~\ref{partial}.

 \begin{proof}[Proof of Corollary~\ref{fgs}]
  
  It is clear that (a) $\to$ (b) $\to$ (c). Suppose that $L$ is not finitely generated. It suffices to show that any Siu-type metric $\vp_S$ of $L$ has non-analytic singularities.   Suppose to the contrary that $\vp_S$ has analytic singularities. As in Introduction, \cite[Theorem 1.1]{DEL} implies that $\vp_S$ and  $\vp_D$ have all the same multiplier ideal sheaves and thus Theorem~\ref{thm2} is applicable.  It follows that $\vp_S \lsm \vp_D$ where $\vp_D$ is the  metric of minimal singularities of $L$. On the other hand, it is always true that $\vp_D \lsm \vp_S$.  Thus $\vp_S \sim \vp_D$ and $L$ is finitely generated by \cite[Proposition 6.5]{BEGZ}, which is contradiction.

 \end{proof}

\begin{proof}[Proof of Theorem~\ref{thm2}]

 Let $w = \sup (u, v)$, the pointwise supremum function which is psh. Then $w$ is less singular than $u$ (i.e. $w \lsm u$). Also $w$ and $u$ have the same Lelong numbers on all proper modifications over $X$ as we see from  Lemma~\ref{same}.

 Let $\pi: X' \to X$ be a log-resolution of $u$ with analytic singularities. Then we get 
 
 \begin{equation}\label{aa}
 \pi^* w \lsm \pi^*u
 \end{equation}
 from $w \lsm u$.   On the other hand, the Demailly approximation sequence $\psi_m$ ($m \ge 1$) of  $\pi^* w$ is determined by the sequence of multiplier ideal sheaves $\JJ (m \pi^* w ) = \JJ(m \pi^*  u)$. Then in fact the sequence $\psi_m$ of psh functions is constant (up to equivalence of singularities) and equal to $\pi^*  u$ since $\JJ(m \pi^*  u)$ is computed by the corresponding SNC divisor on $X'$. By the basic property of Demailly approximation~\cite[Theorem 13.2 (a)]{D}, we get
\begin{equation}\label{bb}
  \pi^* w \gsm \psi_m \sim \pi^* u.
\end{equation} 
  Therefore, it follows from \eqref{aa},\eqref{bb} that $\pi^* w \sim \pi^*  u$ and so $u \sim w \lsm v$.  The theorem is proved. 
 
\end{proof}

\begin{lemma}\label{same}

Let $u$ and $v$ be psh functions on a complex manifold $X$ and let $a := \nu (u,x)$ and $b:= \nu(v, x)$ be their Lelong numbers at a point $x \in X$. For $w := \sup (u,v)$ the pointwise supremum psh function, we have $\nu (w, x) = \min (a,b)$. 

\end{lemma}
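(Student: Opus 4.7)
The plan is to use the classical characterization of the Lelong number at $x$ via the spherical supremum,
\[
\nu(u, x) \;=\; \lim_{r \to 0^+} \frac{M(u, r)}{\log r}, \qquad M(u,r) := \sup_{|z-x|=r} u(z),
\]
in local coordinates centered at $x$; this limit exists because $r \mapsto M(u,r)$ is convex in $\log r$, and equals the standard Lelong number (see \cite{D}). Before applying it, I would record that $w = \sup(u, v)$ is itself plurisubharmonic as the pointwise maximum of two psh functions, so its Lelong number at $x$ is well defined.

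For the easy inequality $\nu(w, x) \le \min(a, b)$, I would invoke monotonicity of the Lelong number: since $w \ge u$ and $w \ge v$ pointwise, one has $\nu(w, x) \le \nu(u, x) = a$ and $\nu(w, x) \le \nu(v, x) = b$, and the upper bound follows.

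For the matching lower bound $\nu(w, x) \ge \min(a, b)$, the key observation is that
\[
M(w, r) \;=\; \max\bigl(M(u, r),\, M(v, r)\bigr)
\]
for every small $r > 0$, and that $\log r < 0$ once $r < 1$. Dividing by the negative number $\log r$ therefore turns the maximum into a minimum:
\[
\frac{M(w, r)}{\log r} \;=\; \min\left( \frac{M(u, r)}{\log r},\, \frac{M(v, r)}{\log r} \right).
\]
Letting $r \to 0^+$, the two ratios on the right converge to $a$ and $b$ respectively, so the left-hand side converges to $\min(a, b)$, giving the desired lower bound and hence equality.

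There is no substantive obstacle: the entire argument collapses to a one-line sign-reversal computation once the right characterization of the Lelong number is in place. The only point where care is needed is in citing that the spherical-supremum normalization of $\nu$ agrees with whichever definition is used elsewhere in the paper, so that the monotonicity used above and the limit formula refer to the same invariant.
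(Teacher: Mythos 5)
Your proof is correct and follows essentially the same route as the paper: both reduce to the observation that dividing $\max(u,v)$ by the negative quantity $\log r$ (resp.\ $\log|z-x|$) turns the max into a min. The only cosmetic difference is that the paper applies this to the characterization $\nu(u,x) = \liminf_{z\to x} u(z)/\log|z-x|$ directly, whereas you route through the spherical supremum $M(u,r)$, which gives a genuine limit and so makes the commutation of $\min$ with the limit slightly more transparent; both versions are standard and equivalent.
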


\begin{proof}

It  follows immediately from the characterization of a Lelong number $\displaystyle \nu(u,x) = \liminf_{z \to x} \frac{u(z)}{\log \abs{z - x} } $. 
\end{proof}

 Finally we remark that it is natural to ask what happens to a Siu-type metric $\rho_S$ in a more general case than Corollary~\ref{fgs} : when $L$ is not big or when $S \subset R(L)$ is not a full graded linear system.

\footnotesize

\bibliographystyle{amsplain}

\qa

\qa

\normalsize

\noi \textsc{Dano Kim}

\noi Department of Mathematical Sciences, Seoul National University

\noi 1 Kwanak-ro, Kwanak-gu, Seoul, Korea 151-747

\noi Email address: kimdano@snu.ac.kr

\noi

%

%

\end{document}